\newtheorem{theorem}{Theorem}[section]
\newtheorem{corollary}[theorem]{Corollary}
\newtheorem{lemma}[theorem]{Lemma}
\newtheorem{mydef}[theorem]{Definition}
\newtheorem{claim}[theorem]{Claim}
\newtheorem{question}[theorem]{Question}
\title{A note on infinite partitions of free products of Boolean algebras}
\author{Mario Jardon Santos}
\date{}
\begin{document}

\maketitle

\begin{abstract}
    If $A$ is an infinite Boolean algebra the cardinal invariant $\mathfrak{a}(A)$ is defined as the smallest size of an infinite partition of $A$. The cardinal $\mathfrak{a}(A\oplus B)$, where $A\oplus B$ is the free product of the Boolean algebras $A$ and $B$ (whose dual topological space is the product of the dual topological spaces of $A$ and $B$), is below both $\mathfrak{a}(A)$ and $\mathfrak{a}(B)$. The equality $\mathfrak{a}(A\oplus B)=\min\lbrace\mathfrak{a}(A),\mathfrak{a}(B)\rbrace$ is not known to hold for all infinite Boolean algebras $A$ and $B$. Here some lower bounds of $\mathfrak{a}(A\oplus B)$ are provided.
\end{abstract}

\section{Introduction}

If $(A,+,\cdot,-,1,0)$, usually abbreviated as $A$, is a Boolean algebra and $P\subseteq A^+ :=A\setminus\lbrace 0\rbrace$, we will say that $P$ is a \textit{disjoint} family of $A$ if $a\cdot b=0$, for all $a,b\in A$. If also for every $x\in A^+$ there exists $a\in P$ such that $a\cdot x\neq 0$, $P$ will be called a \textit{partition} of $A$. If $P\subseteq A^+$ and $\prod_{i\leq k}a_i \neq 0$, for all finite $\lbrace a_i :i\leq k\rbrace\subseteq P$, it will be called a \textit{centered} family of $A$. If $P\subseteq A^+$ is a centered family and there exists $x\in A^+$ such that $x\leq a$, i.e. $x\cdot a=x$, for all $a\in P$, it will be said that $a$ is a \textit{pseudointersection} of $P$. From these concepts the following cardinal invariants are defined for every infinite Boolean algebra $A$: $$\mathfrak{a}(A):=\min\lbrace\vert P\vert:P\subseteq A^+~is~an~infinite~partition\rbrace$$

$$\mathfrak{p}(A):=\min\lbrace\vert P\vert:P\subseteq A^+ ~is~centered~with~no~pseudointersection\rbrace.$$

If $P\subseteq A^+$ is an infinite partition of $A$, it is easy to see that $\lbrace-x:x\in P\rbrace$ is a centered family with no pseudointersection. It follows that $\mathfrak{p}(A)\leq\mathfrak{a}(A)$, for every infinite Boolean algebra $A$.

 If $A$ and $B$ are two Boolean algebras, their free product, denoted $A\oplus B$, is an algebra $
 C$ such that there exist subalgebras $A^\prime ,B^\prime \leq C$, such that $A\cong A^\prime $, $B\cong B^\prime$, $$C=\langle A^\prime \cup B^\prime \rangle:=\left\lbrace\sum_{i<n}a_i \cdot b_i : n<\omega, a_i \in A^\prime , b_i \in B^\prime \right\rbrace$$ and $a\cdot b\neq 0$, for all $a\in A^\prime \setminus\left\lbrace 0\right\rbrace$ and all $b\in B^\prime \setminus\left\lbrace 0\right\rbrace$. Given two Boolean algebras $A$ and $B$, this algebra exists and is unique up to isomorphisms.
 % Given two Boolean algebras $A$ and $B$ its \textit{free product} is a Boolean algebra $C$ such that $A$ and $B$ are (isomorphic to) generating subalgebras of $C$ and that $a\wedge b\neq 0$ for all $a\in A^+$ and $b\in B^+$.

In Theorem 11 of \cite{mio} it is proved that $$\mathfrak{p}(A\oplus B):=\min\lbrace\mathfrak{p}(A),\mathfrak{p}(B)\rbrace.$$ Since every partition $P$ of $A$ (resp. $B$) induces a partition of $A\oplus B$, namely $\lbrace a\cdot 1 : a\in P\rbrace$,  it easily follows that $\mathfrak{a}(A\oplus B)\leq\mathfrak{a}(A),\mathfrak{a}(B)$, for all $A$ and $B$ infinite Boolean algebras. In the light of all this, in \cite{monk} (Problem 8) a pretty simple question on this cardinal invariant was asked:

\begin{question}
\label{pregunta_monk}
Does $$\mathfrak{a}(A\oplus B)=\min\lbrace\mathfrak{a}(A),\mathfrak{a}(B)\rbrace$$ hold for any pair of infinite Boolean algebras $A$ and $B$?
\end{question}

A partial answer to this question was given in Theorem 13 of \cite{mio}.

\begin{theorem}
\label{cota1}
If $A$ and $B$ are infinite Boolean algebras, then
 $$\min\left\lbrace \min\left\lbrace\mathfrak{a}\left( A\right) ,\mathfrak{a}\left( B\right)\right\rbrace ,\max\left\lbrace\mathfrak{p}\left( A\right) ,\mathfrak{p}\left( B\right)\right\rbrace\right\rbrace\leq \mathfrak{a}\left(A\oplus B\right).$$
\end{theorem}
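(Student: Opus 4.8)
The plan is to show that every infinite partition $P=\{p_\xi:\xi<\lambda\}$ of $A\oplus B$ satisfies $\lambda\ge\min\{\min\{\mathfrak a(A),\mathfrak a(B)\},\max\{\mathfrak p(A),\mathfrak p(B)\}\}$. So I suppose toward a contradiction that $\lambda<\mathfrak a(A)$, $\lambda<\mathfrak a(B)$ and $\lambda<\max\{\mathfrak p(A),\mathfrak p(B)\}$, and derive that $P$ fails to be a partition. Writing each element in disjoint normal form $p_\xi=\sum_{i<n_\xi}a_i^\xi\cdot b_i^\xi$ with $a_i^\xi\in A^+$ and $b_i^\xi\in B^+$, the reduction I would rely on is: it suffices to produce $u\in A^+$ and $v\in B^+$ such that for every $\xi<\lambda$ and every $i<n_\xi$ one has $u\cdot a_i^\xi=0$ or $v\cdot b_i^\xi=0$. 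Indeed, then $(u\cdot v)\cdot p_\xi=\sum_{i<n_\xi}(u\cdot a_i^\xi)\cdot(v\cdot b_i^\xi)=0$ for all $\xi$, while $u\cdot v\neq 0$ by the defining property of the free product, so $u\cdot v$ is a nonzero element disjoint from every member of $P$, contradicting that $P$ is a partition. Since $A\oplus B\cong B\oplus A$ and the asserted bound is symmetric in $A$ and $B$, I may assume $\mathfrak p(A)\le\mathfrak p(B)$, so that the last hypothesis reads $\lambda<\mathfrak p(B)$.

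Granting a suitable base element $u\in A^+$, the companion element $v$ will come from $\mathfrak p(B)$. Fix $u$ and consider the activated family $\mathcal B_u:=\{b_i^\xi:\xi<\lambda,\ i<n_\xi,\ a_i^\xi\cdot u\neq 0\}\subseteq B^+$, which has size at most $\lambda$. If $\{-b:b\in\mathcal B_u\}$ is centered, then since $\lambda<\mathfrak p(B)$ it admits a pseudointersection $v\in B^+$, and this $v$ satisfies $v\cdot b=0$ for every $b\in\mathcal B_u$; this is exactly the property required above, because any pair with $u\cdot a_i^\xi\neq 0$ has $b_i^\xi\in\mathcal B_u$ and hence $v\cdot b_i^\xi=0$. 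Thus the entire problem reduces to choosing $u\in A^+$ so that no finite subfamily of $\mathcal B_u$ has join $1$ in $B$, equivalently so that the finite joins $\bigvee_{(\xi,i)\in F}b_i^\xi$, taken over finite sets $F$ of pairs whose $A$-parts meet $u$, never exhaust $B$.

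The hard part is precisely this choice of base, and it is where $\lambda<\mathfrak a(A)$, and plausibly $\lambda<\mathfrak a(B)$ as well, must be used. I would attempt it by contradiction: assuming that for every $u\in A^+$ some finite set of $u$-activated $B$-parts covers $B$, I would try to manufacture from these finite covers an infinite partition of $A$ of size at most $\lambda$, contradicting $\lambda<\mathfrak a(A)$. Writing $r_\xi:=\bigvee_{i<n_\xi}a_i^\xi$ for the $A$-projections, the density of $P$ makes $\{r_\xi:\xi<\lambda\}$ predense in $A$, and the failure of every base would have to be played against this predensity; a companion option is a transfinite recursion of length $<\mathfrak a(A)$ that localizes in $\mathrm{St}(A)$ while using the largeness of $\mathfrak p(B)$ to keep a pseudointersection alive in $B$ across limit stages. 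I expect the genuine obstacle to be that, in an incomplete Boolean algebra, a predense family of size $<\mathfrak a(A)$ need not reduce to a finite predense subfamily, so the base $u$ cannot simply be read off from the $r_\xi$; this is the same incompleteness that defeats a naive disjointification, and I suspect it is exactly why this method yields only a lower bound rather than the full equality of Question~\ref{pregunta_monk}.
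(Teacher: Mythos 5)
Your reduction is sound as far as it goes: writing each $p_\xi$ in normal form, noting that $u\cdot v\neq 0$ whenever $u\in A^+$, $v\in B^+$, and observing that a pseudointersection of $\lbrace -b : b\in\mathcal B_u\rbrace$ (available once that family is centered and has size $<\mathfrak p(B)$) supplies the companion $v$ --- all of that is correct, and it matches the paper's standing reduction of partitions of $A\oplus B$ to families of rectangles $a_\alpha\times b_\alpha$. But the proof has a genuine gap exactly where you say the hard part is: you never actually produce the base $u$, you only describe two strategies you ``would attempt,'' and neither is carried out. The contradiction strategy as stated does not obviously work: from the assumption that every $u\in A^+$ activates a finite cover of $Y$ you would get, at best, a \emph{predense} family of $A$-parts of size $\le\lambda$, not a disjoint one, and $\mathfrak a(A)$ says nothing about predense families --- this is the very obstacle you flag in your last sentence, so the argument as written refutes its own closing step rather than completing it.

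The missing mechanism is the pairwise dichotomy that drives all of the paper's Section 2: since the rectangles are pairwise disjoint, for each pair $\alpha\neq\beta$ either $a_\alpha\cdot a_\beta=0$ or $b_\alpha\cdot b_\beta=0$. Lemma~\ref{caso_infinito} extracts from this a countable set $E$ with $\lbrace a_\alpha:\alpha\in E\rbrace$ or $\lbrace b_\alpha:\alpha\in E\rbrace$ centered. In the first case one takes $u$ to be a pseudointersection of $\lbrace a_\alpha:\alpha\in E\rbrace$ (legitimate because $\omega<\mathfrak a(A)$ forces $\omega<\mathfrak p(A)$); then every activated $b_\alpha$ is disjoint from the infinite pairwise disjoint family $\lbrace b_\alpha:\alpha\in E\rbrace$, so no finite union of activated $B$-parts covers $Y$, which is precisely the centeredness you need before invoking $\lambda<\mathfrak p(B)$. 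The second case, where only the $B$-parts admit a centered subfamily while you have assumed $\lambda<\mathfrak p(B)$ rather than $\lambda<\mathfrak p(A)$, requires a separate argument using $\lambda<\mathfrak a(A)$ and is where the real work of the cited Theorem 13 of \cite{mio} lies; your sketch does not address it at all. (Note also that this paper does not reprove Theorem~\ref{cota1} --- it quotes it from \cite{mio} --- so the standard of comparison here is the machinery of Lemma~\ref{caso_infinito} and Theorems~\ref{cota_s} and \ref{cota_b}, which your proposal does not reach.)
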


Observe that from this theorem it follows that any instance of $$\mathfrak{a}(A\oplus B)<\min\lbrace\mathfrak{a}(A),\mathfrak{a}(B)\rbrace$$ is one of $\mathfrak{p}(A),\mathfrak{p}(B)<\mathfrak{a}(A),\mathfrak{a}(B)$. Since $\mathfrak{a}(A)=\omega$ iff $\mathfrak{p}(A)=\omega$, for every infinite Boolean algebra $A$, for getting such a counterexample we need that $\omega_1 \leq\mathfrak{p}(A),\mathfrak{p}(B)$, and hence that $\omega_1 \leq\mathfrak{a}(A\oplus B)$. In this note a couple of lower bounds to $\mathfrak{a}(A\oplus B)$, for $A$ and $B$ infinite Boolean algebras, will be given, adding some nuance to the bound of Theorem \ref{cota1}. The first one will only work on \textit{homogeneous} Boolean algebras.

\begin{mydef}
A Boolean algebra $(A,+,\cdot,-,1,0)$ will be called homogeneous if for all $x\in A$, the Boolean algebra defined on $A\upharpoonright x:=\lbrace y\in A: y\leq x\rbrace$, with structure $(A\upharpoonright x,+,\cdot,-^\prime ,x,0)$, where $-^\prime y:=(-y)\cdot x$, for all $y\leq x$, is isomorphic to $A$.
\end{mydef}

A famous example of homogeneous Boolean algebra is the quotient $P(\omega)\slash fin$, i.e. the power set of $\omega$ modulo the ideal of finite sets of $\omega$. Now we define a couple of concepts and cardinal invariants that will help with the bounds given in this note.

\begin{mydef}
Let $A$ be an atomless Boolean algebra, i.e. an algebra such that for all $x\in A^+$ there exists $y\in A^+$ such that $y<x$. A family $P\subseteq A^+$ will be called splitting if for all $x\in A^+$ there exists $y\in P$ such that $x\cdot y\neq 0\neq x\cdot(-y)$. We define $\mathfrak{s}(A)$, the splitting number of $A$, as the smallest size of a splitting family of $A$.
\end{mydef}
%Namely for \textit{homogeneous} infinite Boolean algebras.

\begin{mydef}
Let $A$ be an infinite Boolean algebra. A pair $(\mathcal{A},\mathcal{B})$, for $\mathcal{A},\mathcal{B}\subseteq A^+$, will be called a Rothberger gap if $\vert\mathcal{A}\vert=\omega$, $a\cdot b=0$, for all $a\in\mathcal{A}$ and $b\in\mathcal{B}$, and there is no $c\in A^+$ such that $a\cdot c=0$, for all $a\in\mathcal{A}$ and $b\leq c$, for all $b\in\mathcal{B}$. If there exists a Rothberger gap in $A$, then define $$\mathfrak{b}(A):=\min\lbrace\vert\mathcal{B}\vert:\exists\mathcal{A}\subseteq A^+ ~(\mathcal{A},\mathcal{B})~is~a~Rothberger~gap\rbrace.$$
\end{mydef}

Observe that any maximal centered subfamily of a splitting family is a centered family with no pseudointersection. Therefore $\mathfrak{p}(A)\leq\mathfrak{s}(A)$, for all atomless infinite Boolean algebra $A$. Also, if $\mathfrak{a}(A)\geq\omega_1$ and $\lbrace a_\alpha :\alpha<\kappa\rbrace$ is an infinite partition, then $\mathcal{A}:=\lbrace a_n :n<\omega\rbrace$ and $\mathcal{B}:=\lbrace a_\alpha :\alpha\in\kappa\setminus\omega\rbrace$ form a Rothberger gap: otherwise if there is $c\in A^+$ such that $a_n \cdot c=0$, for all $n<\omega$ and $a_\alpha \leq c$, for all $\alpha\in\kappa\setminus\omega$, then $\mathcal{A}\cup\lbrace c\rbrace$ is a partition of $A$, which is a contradiction. Therefore $\mathfrak{b}(A)\leq\mathfrak{a}(A)$, for all infinite Boolean algebra $A$ with no countable partitions.

\section{Lower bounds for $\mathfrak{a}(A\oplus B)$}

Since every Boolean algebra is isomorphic to the algebra of clopen sets of some zero-dimensional compact Hausdorff space, from now on $A$ and $B$ will be respectively the algebra of clopen sets of some zero-dimensional compact Hausdorff spaces $X$ and $Y$. Accordingly $A\oplus B$ will refer to the algebra of clopen sets of the product space $X\times Y$.\footnote{For topological duality, as well as other basic topics on Boolean algebras, the reader is referred to \cite{handbook}.}

Observe that if $c\in A\oplus B$, then there exist $\lbrace a_i : i<k\rbrace\subseteq A$ and $\lbrace b_i : i<k\rbrace\subseteq B$, for $k<\omega$, such that $$c=\bigcup_{i<k}a_i \times b_i .$$ Since the following equality holds:

$$c=\bigcup_{\emptyset\neq J\subseteq k}(\bigcap_{i\in J}a_i \setminus\bigcup_{j\in k\setminus J}a_j )\times\bigcup_{i\in J}b_i$$ we can always assume that either $\lbrace a_i : i<k\rbrace$ is a disjoint family or that $\lbrace b_i : i<k\rbrace$ is a disjoint family. Therefore when dealing with infinite partitions (or disjoint families) of $A\oplus B$ we can always assume that they are of the form $\lbrace a_\alpha \times b_\alpha :\alpha<\kappa\rbrace$, where $a_\alpha \in A$ and $b_\alpha \in B$, for all $\alpha<\kappa$.%, for some infinite cardinal $\kappa$.

\begin{theorem}
\label{cota_s}
Suppose that $A$ and $B$ are homogeneous and that $\omega_1 \leq\mathfrak{a}(A),\mathfrak{a}(B)$. Then $\min\lbrace\mathfrak{a}(A),\mathfrak{a}(B),\max\lbrace\mathfrak{s}(A),\mathfrak{s}(B)\rbrace\rbrace\leq\mathfrak{a}(A\oplus B)$.
\end{theorem}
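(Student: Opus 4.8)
The plan is to argue by contraposition. Suppose $\kappa:=\mathfrak{a}(A\oplus B)<\min\lbrace\mathfrak{a}(A),\mathfrak{a}(B),\max\lbrace\mathfrak{s}(A),\mathfrak{s}(B)\rbrace\rbrace$, so in particular $\kappa<\mathfrak{a}(A)$, $\kappa<\mathfrak{a}(B)$ and, without loss of generality, $\kappa<\mathfrak{s}(A)$. As remarked in the introduction, the hypothesis $\omega_1\le\mathfrak{a}(A),\mathfrak{a}(B)$ forces $\omega_1\le\mathfrak{a}(A\oplus B)=\kappa$. Fix a partition of $A\oplus B$ of minimal size, which by the reduction preceding the theorem we may take in the form $\lbrace a_\alpha\times b_\alpha:\alpha<\kappa\rbrace$.

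First I would exploit $\kappa<\mathfrak{s}(A)$. Since $A$ is homogeneous it is atomless, so $\mathfrak{s}(A)$ is defined, and the family $\lbrace a_\alpha:\alpha<\kappa\rbrace$, having size at most $\kappa<\mathfrak{s}(A)$, cannot be splitting. Hence there is $x\in A^+$ deciding every $a_\alpha$, i.e. for each $\alpha$ either $x\le a_\alpha$ or $x\cdot a_\alpha=0$. Put $I:=\lbrace\alpha<\kappa:x\le a_\alpha\rbrace$. Using that the blocks are pairwise disjoint and that $A\upharpoonright x\cong A$ (so that $(A\oplus B)\upharpoonright(x\times 1)\cong(A\upharpoonright x)\oplus B\cong A\oplus B$), a short computation shows that $\lbrace b_\alpha:\alpha\in I\rbrace$ is a partition of $B$: for $\alpha\neq\beta$ in $I$ the product $a_\alpha\cdot a_\beta\ge x$ is nonzero, so disjointness of the blocks forces $b_\alpha\cdot b_\beta=0$; and for $y\in B^+$ the element $x\times y$ meets some block, which must be indexed in $I$. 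If $I$ is infinite this is an infinite partition of $B$ of size at most $\kappa<\mathfrak{a}(B)$, a contradiction.

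The remaining, and main, case is $I$ finite. Here I would first enlarge the witness to $d:=\prod_{\alpha\in I}a_\alpha$, which is nonzero since $d\ge x$. Because $\lbrace b_\alpha:\alpha\in I\rbrace$ is now a finite partition of $B$, it sums to $1$, so every $b_\beta$ with $\beta\notin I$ meets some $b_\alpha$ ($\alpha\in I$); disjointness of the corresponding blocks then gives $a_\beta\cdot a_\alpha=0$, whence $d\cdot a_\beta=0$. Thus $d$ is disjoint from every block outside $I$, and the slice $(-d)\times 1$, which is isomorphic to $A\oplus B$ by homogeneity of $A$, carries the induced partition of size at most $\kappa$. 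I would then iterate: at each stage, after reducing the finite ``below'' set to a singleton by passing to one piece of the current finite partition of $B$ (legitimate since $B$ is homogeneous), peel off a block whose first coordinate $c_j$ lies in $A^+$, the $c_j$ being pairwise disjoint because each is chosen below the complement of the previous ones. The recursion halts with a contradiction the moment the first-coordinate family becomes splitting (against $\mathfrak{s}(A)>\kappa$, as $\mathfrak{s}$ is isomorphism-invariant) or the below-set becomes infinite (against $\mathfrak{a}(B)>\kappa$, by the previous paragraph). Otherwise it runs through all of $\omega$ and produces a disjoint family $\lbrace c_j:j<\omega\rbrace\subseteq A^+$.

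\textbf{The main obstacle} is exactly this last scenario: I must show that $\lbrace c_j:j<\omega\rbrace$ is not merely disjoint but a genuine partition of $A$, for then it is a countable partition contradicting $\omega_1\le\mathfrak{a}(A)$ and the proof is complete. Equivalently, I must rule out a ``surviving'' $p\in A^+$ with $p\cdot c_j=0$ for all $j$. Here I expect to use the hypothesis once more through $\mathfrak{p}(B)$: since $\omega_1\le\mathfrak{a}(B)$ gives $\omega_1\le\mathfrak{p}(B)$, the nested pieces $b^{(0)}\ge b^{(1)}\ge\cdots$ of $B$ chosen along the recursion form a countable centered family and hence admit a pseudointersection $b^\ast\in B^+$; feeding the nonzero element $p\times b^\ast$ back into the original partition, together with the maximality of that partition, should contradict the survival of $p$. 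Making this density/termination argument precise — in particular choosing the witnesses and the pieces $b^{(j)}$ coherently enough that no nonzero $p$ escapes — is where the real work lies, while the two ``easy'' exits (a splitting family of $A$, or an infinite block-indexed partition of $B$) are what convert the bound into the stated minimum.
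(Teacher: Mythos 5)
Your opening moves are sound: the reduction to rectangular blocks, the use of $\kappa<\mathfrak{s}(A)$ to find a single $x$ deciding every $a_\alpha$, the observation that $\lbrace b_\alpha:\alpha\in I\rbrace$ is a partition of $B$ (infinite $I$ giving an immediate contradiction with $\kappa<\mathfrak{a}(B)$), and the peeling-off of $d=\prod_{\alpha\in I}a_\alpha$ when $I$ is finite are all correct. But the proof is not complete, and the missing step is precisely the one you flag yourself: when the recursion runs through all of $\omega$ you obtain a countable disjoint family $\lbrace c_j:j<\omega\rbrace\subseteq A^+$, and you need it to be \emph{maximal} to contradict $\omega_1\le\mathfrak{a}(A)$. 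Nothing in the construction forces maximality: each $c_j$ comes from an arbitrary witness to non-splitting inside the current piece, and a length-$\omega$ recursion cannot be made dense in an algebra $A$ of arbitrary cardinality. The rescue you propose does not close the gap either: if $p\in A^+$ is disjoint from every $c_j$ and $b^\ast$ is a pseudointersection of the nested $b^{(j)}$, then $p\times b^\ast$ does meet some block $a_\alpha\times b_\alpha$, but that block may be one of the finitely many ``below'' blocks of some stage (for which $p\cdot c_j=0$ does not imply $p\cdot a_\alpha=0$), or a block the recursion never interacted with; no contradiction follows. Worse, the natural continuation from there --- the $b_{\alpha_j}$ form an infinite centered family in $B$, so take a pseudointersection and split below it --- is blocked because your case assumption only gives $\kappa<\mathfrak{s}(A)$, not $\kappa<\mathfrak{s}(B)$.

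The paper avoids this dead end by a different dichotomy on the first coordinates: either some countable subfamily of $\lbrace a_\alpha\rbrace$ is centered, in which case one takes a pseudointersection $a'$ (using $\omega_1\le\mathfrak{p}(A)$), finds a non-splitting witness \emph{below} $a'$ so that the set $E=\lbrace\alpha: a\subseteq a_\alpha\rbrace$ is automatically infinite, and then uses $\kappa<\mathfrak{a}(B)$ on the infinite disjoint family $\lbrace b_\alpha:\alpha\in E\rbrace$; or no countable subfamily is centered, in which case homogeneity yields a family $\lbrace c_\gamma:\gamma<\vert A\vert\rbrace$ of deciding elements that is \emph{dense} in $A^+$, each ``below'' set $f(\gamma)$ is finite, and a single global argument (the Claim about $f[\lambda]$ and the disjoint family $d_E=\bigcap_{\alpha\in E}a_\alpha$) produces a rectangle missing every block. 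The density of the whole $\lambda$-indexed family is exactly what substitutes for the maximality your countable recursion cannot deliver. To repair your argument you would need either to reorganize it around this dichotomy or to find a genuinely new termination argument; as written, the core of the theorem remains unproved.
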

\begin{proof}
Suppose that $\omega_1 \leq\kappa<\mathfrak{a}(A),\mathfrak{a}(B),\max\lbrace\mathfrak{s}(A),\mathfrak{s}(B)\rbrace$ and that $P=\lbrace a_\alpha \times b_\alpha :\alpha<\kappa\rbrace$ is a disjoint subfamily of  $A\oplus B$. Without loss of generality we can suppose that $\kappa<\mathfrak{s}(A)$. We will prove two cases.

\textit{Case 1.} There exists $E\in[\kappa]^\omega$ such that $\lbrace a_\alpha :\alpha\in E\rbrace$ is a centered family. Without loss of generality $E=\omega$. Since $\omega_1 \leq\mathfrak{p}(A)$, we can take $a^\prime \in A^+$ such that $a^\prime \subseteq a_n$, for all $n<\omega$. Furthermore, take $a\in A\upharpoonright a^\prime$ which witnesses that $\lbrace a_\alpha \cap a^\prime :\alpha<\kappa\rbrace$ is not a splitting family of $A\upharpoonright a^\prime$, i.e. for all $\alpha<\kappa$, either $a\cap a_\alpha =\emptyset$ or $a\subseteq a_\alpha$. Since $E:=\lbrace\alpha<\kappa :a\subseteq a_\alpha \rbrace$ is an infinite set, it follows that $\lbrace b_\alpha :\alpha\in E\rbrace$ is infinite disjoint family of $B$. Also, since $\kappa<\mathfrak{a}(B)$, there exists $b\in B$ such that $b\cap b_\alpha =\emptyset$, for all $\alpha\in E$. Take $\alpha<\kappa$. If $\alpha\in E$, then $b\cap b_\alpha =\emptyset$. If $\alpha\notin E$, then $a\cap a_\alpha =\emptyset$. In either case $a\times b$ is disjoint to $a_\alpha \times b_\alpha$, which means that $P$ is not an infinite partition.

\textit{Case 2.} The family $\lbrace a_\alpha :\alpha\in E\rbrace$ is not centered, for all $E\in[\kappa]^\omega$. The family $\lbrace a_\alpha :\alpha<\kappa\rbrace$ is not splitting. Note that if $c\in A^+$ witnesses this fact, so does every $0\neq c^\prime \subseteq c$. Since $A$ is homogeneous, if $\lambda=\vert A\vert$, there exists $\lbrace c_\gamma :\gamma<\lambda\rbrace\subseteq A^+$ such that for all $\alpha<\kappa$ and all $\gamma<\lambda$ either $c_\gamma \subseteq a_\alpha$ or $c_\gamma \cap a_\alpha =\emptyset$. For all $\gamma<\lambda$ the set $\lbrace\alpha<\kappa:c_\gamma \subseteq a_\alpha \rbrace$ is finite, by hypothesis. Then we can define $f:\lambda\rightarrow[\kappa]^{<\omega}$ such that $c_\gamma \subseteq a_\alpha$ iff $\alpha\in f(\gamma)$, for all $\alpha<\kappa$ and $\gamma<\lambda$.

\begin{claim}
One of the following statements holds:

\begin{itemize}
    \item $\emptyset\in f[\lambda]$

    \item  there exists $E\in f[\lambda]$ such that $b:=Y\setminus\bigcup_{\alpha\in E}b_\alpha$ is not empty

    \item there exist $E\in f[\lambda]$ and $\beta\in\kappa\setminus E$ such that $\lbrace a_\alpha :\alpha\in E\cup\lbrace\beta\rbrace\rbrace$ is a centered family.
\end{itemize}
\end{claim}

\begin{proof}
Suppose that $\emptyset\notin f[\lambda]$, that $\lbrace a_\alpha :\alpha\in E\cup\lbrace\beta\rbrace\rbrace$ is not centered, for all $E\in f[\lambda]$ and all $\beta\in\kappa\setminus E$, and that $Y=\bigcup_{\alpha\in E}b_\alpha$, for all $E\in f[\lambda]$. For $E\in f[\lambda]$, define $d_E :=\bigcap_{\alpha\in E}a_\alpha$.  Observe that $\lbrace d_E :E\in f[\lambda]\rbrace$ is a disjoint family. If $f[\lambda]$ is finite and $X=\bigcup_{E\in f[\lambda]}d_E$, this means that $\lbrace a_\alpha \times b_\alpha :\alpha\in\bigcup f[\lambda]\rbrace$ covers all $X\times Y$, which is a contradiction. Since $\kappa<\mathfrak{a}(A)$, this means that either if $f[\lambda]$ is finite or not, there exists $c\in A^+$ such that $c\cap d_E =\emptyset$, for all $E\in f[\lambda]$.

Since $A$ is homogeneous and $\kappa<\mathfrak{s}(A)$, then there exists $\gamma<\lambda$ such that $c_\gamma \subseteq c$. If $E=f(\gamma)$, then $c_\gamma \subseteq d_E$, but this is a contradiction.

\end{proof}

If there exists $\gamma<\lambda$ such that $f(\gamma)=\emptyset$, then $c_\gamma \times Y$ witnesses that $P$ is not a partition. Suppose that this is not the case and for each $E\in f[\lambda]$ choose $c_E =c_\gamma$, for some $\gamma\in f^{-1}[E]$. If there exists $E\in f[\lambda]$ such that $b:=Y\setminus\bigcup_{\alpha\in E}b_\alpha$ is not empty, then $c_E \times b$ witnesses that $P$ is not a partition. If there exist $E\in f[\lambda]$ and $\beta\in\kappa\setminus E$ such that $\lbrace a_\alpha :\alpha\in E\cup\lbrace\beta\rbrace\rbrace$ is a centered family, then $c_E \times b_\beta$ witnesses that $P$ is not a partition.

%
%Suppose then that $Y=\bigcup_{\alpha\in E}b_\alpha$, for all $E\in f[\lambda]$. Observe that $\lbrace c_E :E\in f[\lambda]\rbrace$ is a disjoint family.
\end{proof}

Since $\max\lbrace\mathfrak{p}(A),\mathfrak{p}(B)\rbrace\leq\max\lbrace\mathfrak{s}(A),\mathfrak{s}(B)\rbrace$, for all homogeneous Boolean algebras $A$ and $B$, this more specific theorem gives an improvement to Theorem \ref{cota1}.

\begin{lemma}
\label{caso_infinito}
Suppose %that $\omega_1 \leq\mathfrak{a}(A\oplus B)$ and
that $P=\lbrace a_\alpha \times b_\alpha :\alpha <\kappa\rbrace$ is an infinite partition of $A\oplus B$. Then there exists $\lbrace\alpha_n : n<\omega\rbrace\subseteq\kappa$ such that either $\lbrace a_{\alpha_n} : n<\omega\rbrace$ is a centered family or $\lbrace b_{\alpha_n} : n<\omega\rbrace$ is a centered family.
\end{lemma}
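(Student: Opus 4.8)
The plan is to recast the statement topologically and prove its contrapositive. First I would record the basic compactness observation that, since each $a_\alpha$ and each $b_\alpha$ is clopen, hence closed, in the compact spaces $X$ and $Y$, an infinite subfamily $\lbrace a_{\alpha_n}:n<\omega\rbrace$ is centered if and only if it has the finite intersection property, which by compactness holds if and only if $\bigcap_n a_{\alpha_n}\neq\emptyset$, i.e. some point of $X$ lies in all of them. Consequently the lemma is equivalent to the assertion that some point of $X$ lies in infinitely many of the $a_\alpha$ or some point of $Y$ lies in infinitely many of the $b_\alpha$. I would therefore assume the negation, namely that every $x\in X$ lies in only finitely many $a_\alpha$ and every $y\in Y$ lies in only finitely many $b_\alpha$, and aim to show that $P$ must then be finite, contradicting the hypothesis that it is an infinite partition.

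The engine of the argument is the reformulation of ``$P$ is a partition'' as density of $D:=\bigcup_{\alpha<\kappa}a_\alpha\times b_\alpha$ in $X\times Y$. Two consequences drive the proof. On the one hand, if $D=X\times Y$ then, the rectangles being open, pairwise disjoint and covering the compact space $X\times Y$, a finite subcover exists and disjointness forces $P$ itself to be finite; so it suffices to prove $D=X\times Y$. On the other hand, density says precisely that for every nonempty clopen $V\subseteq Y$ the set $Z_V:=\bigcup\lbrace a_\alpha: b_\alpha\cap V\neq\emptyset\rbrace$ is dense open in $X$, and symmetrically that $W_U:=\bigcup\lbrace b_\alpha: a_\alpha\cap U\neq\emptyset\rbrace$ is dense open in $Y$ for every nonempty clopen $U\subseteq X$.

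To prove $D=X\times Y$ I would show that every vertical slice is covered. Fixing $x$, the finitely many rectangles through it contribute the clopen set $B_x:=\bigcup\lbrace b_\alpha: x\in a_\alpha\rbrace$, and by the tube lemma $\lbrace x\rbrace\times Y\subseteq D$ is equivalent to the existence of a clopen block $U\ni x$ with $U\times Y\subseteq D$; such a block is covered by finitely many rectangles and, by disjointness, meets no others, so it may be peeled off and the whole argument repeated on the complementary clopen strip $(X\setminus U)\times Y$, which again carries a dense, disjoint, point-finite family. To produce one covered slice I would feed a countable family of nonempty clopen sets $V_n\subseteq Y$ through the Baire category theorem on $X$: a point $x\in\bigcap_n Z_{V_n}$ yields, for each $n$, an index $\gamma_n$ with $x\in a_{\gamma_n}$ and $b_{\gamma_n}\cap V_n\neq\emptyset$, and since $x$ lies in only finitely many $a_\alpha$, some index must repeat cofinally; when the $V_n$ are chosen to form a base (as one may when $Y$ is second countable) this repetition forces $B_x$ to meet every nonempty clopen subset of $Y$, hence $B_x=Y$.

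The step I expect to be the main obstacle is the \emph{globalization}: passing from ``some slice is covered'' to ``every slice is covered'', equivalently showing that the peeled clopen blocks exhaust $X$ after finitely many stages, so that compactness delivers a finite $P$. This is genuinely delicate because in a non-metrizable Stone space, such as the one dual to $\mathcal{P}(\omega)$, one cannot shrink clopen neighborhoods down to a single point and the Baire/slice step above cannot simply be localized at an arbitrary point. I would therefore replace the convergence heuristic by a purely combinatorial recursion inside the two clopen algebras that builds an infinite centered subfamily directly: maintain a finite set of indices whose chosen-side factors are finitely intersecting with an infinite common part, and at each stage use density (in the guise of $Z_V$, or of the fact that each slice is partitioned into finitely many right-factors of which one must be infinite) to adjoin a fresh rectangle, point-finiteness guaranteeing that the adjoined index is new. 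The single configuration to be excluded is that the selected factors eventually partition the whole space; ruling this out amounts to showing that the ``peel and descend'' process is well founded, and making that descent precise, uniformly for metrizable and non-metrizable duals alike, is the crux of the proof.
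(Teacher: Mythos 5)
Your reduction of the lemma to the statement ``some point of $X$ lies in infinitely many $a_\alpha$ or some point of $Y$ lies in infinitely many $b_\alpha$'' is correct (clopen sets are closed, so by compactness an infinite centered family has nonempty intersection), and the observation that density of $D=\bigcup_\alpha a_\alpha\times b_\alpha$ makes each $Z_V$ dense open is also correct. But the proof is not complete, and you have in effect flagged the missing piece yourself. Two distinct gaps remain. First, the Baire-category production of a covered slice uses a countable base for $Y$, so it only applies when the algebras are countable; for the algebras the paper actually cares about (e.g.\ $P(\omega)/fin$) the Stone spaces are neither second countable nor amenable to intersecting $|B|$-many dense open sets. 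Second, even in the second countable case, the passage from ``one slice is covered'' to ``$D=X\times Y$, hence $P$ is finite'' is not established: the peeling process produces a pairwise disjoint family of clopen blocks $U_\xi$ with $U_\xi\times Y\subseteq D$ whose union is dense in $X$, but an infinite disjoint family of clopen sets with dense union need not cover $X$, so the recursion is not shown to terminate and the target $D=X\times Y$ (which is strictly stronger than density) is never reached. Your final paragraph replaces both steps by a ``purely combinatorial recursion'' described only as maintaining a finite centered configuration and adjoining fresh rectangles, with the admission that making the descent precise ``is the crux of the proof.'' That crux is exactly the entire content of the lemma, so the proposal stops where the proof has to begin.

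For comparison, the paper's argument avoids slices and Baire category altogether. Assuming the $A$-side has no infinite centered subfamily, it observes that any $E\subseteq\kappa$ maximal with $\lbrace a_\alpha:\alpha\in E\rbrace$ centered is finite and forces $\bigcup_{\alpha\in E}b_\alpha=Y$ (otherwise $\bigl(\bigcap_{\alpha\in E}a_\alpha\bigr)\times\bigl(Y\setminus\bigcup_{\alpha\in E}b_\alpha\bigr)$ would miss every member of $P$). It then runs a pigeonhole recursion: at stage $n$ the finitely many $b_\alpha$ with $\alpha$ in the current maximal set cover the running intersection $\bigcap_{l<n}b_{\alpha_l}$, so one of them meets $\kappa$-many of the remaining $b_\beta$; selecting that index keeps the $B$-side intersection nonempty and the reservoir of usable indices of size $\kappa$, yielding an infinite centered family $\lbrace b_{\alpha_n}:n<\omega\rbrace$. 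If you want to salvage your approach, this covering property of maximal centered sets is the concrete form of ``density in the guise of $Z_V$'' that your sketch needs, and the $\kappa$-sized reservoir $H_n$ is what makes the recursion well founded.
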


\begin{proof}
Suppose that if $E\subseteq\kappa$ is such that $\lbrace a_\alpha :\alpha\in E\rbrace$ is a centered family, then $\vert E\vert <\omega$. %Enumerate $\lbrace E_\beta : \beta <\kappa\rbrace\subseteq[\kappa]^{<\omega}$ all $E\in[\kappa]^{<\omega}$, such that $\lbrace a_\alpha : \alpha\in E\rbrace$ is centered and maximal with this property.
Observe that if $E$ is maximal with this property, then $\lbrace b_\alpha :\alpha\in E\rbrace$ is a disjoint family and $$\bigcup_{\alpha\in E} b_\alpha =Y.$$  Otherwise, if $a:=\bigcap_{\alpha\in E}a_\alpha$ and $b:=Y\setminus\bigcup_{\alpha\in E}b_\alpha$, then $a\times b$ would witness that $P$ is not a partition.

We will recursively construct a sequence $\lbrace\alpha_n :n<\omega\rbrace$, such that $\lbrace b_{\alpha_n}:n<\omega\rbrace$ is a centered family. Extend $\lbrace 0\rbrace$ to a set $E_0$, maximal with the property that $\lbrace a_{\alpha}:\alpha\in E_0 \rbrace$ is centered.  Enumerate $E_0 =\lbrace\alpha_{0}^0 ,...,\alpha_{0}^{k_0 -1}\rbrace$ and define $H_{0}^i :=\lbrace\beta\in\kappa\setminus E_0 :b_{\alpha_{0}^i}\cap b_\beta \neq\emptyset\rbrace,$ for all $i<n_0$.  There exists $i_0 <k_0$ such that $\vert H_{0}^{i_0} \vert=\kappa$. Define $\alpha_0 :=\alpha_{0}^{i_0}$ and $H_0 :=H_{0}^{i_0}$.

Suppose now that for some $n\geq 1$ we have constructed $\lbrace\alpha_l :l<n\rbrace\subseteq\kappa$, $\lbrace H_l :l<n\rbrace\subseteq[\kappa]^\kappa$, and $\lbrace E_l :l<n\rbrace\subseteq[\kappa]^{<\omega}$ such that \begin{itemize}
    \item $\alpha_l \in E_l$,
    \item $\alpha_{l^\prime}\neq\alpha_{l}$ and
    \item $H_l \subseteq H_{l^\prime}$, for all $l^\prime <l<n$, and that
    \item $$b_\beta \cap\bigcap_{l<k}b_{\alpha_l}\neq\emptyset,$$ iff  $\beta\in H_{n-1}$, for all $\beta<\kappa$.

\end{itemize} Define $b:=\bigcap_{l<k}b_{\alpha_l}$ and take $\beta\in H_{n-1}\setminus\lbrace\alpha_l :l<n\rbrace$. Extend $\lbrace\beta\rbrace$ to a family $E_n$, maximal with the property that $\lbrace a_\alpha :\alpha\in E_n \rbrace$ is centered. Clearly $$b=\bigcup_{\alpha\in E_n \cap H_{n-1}}b\cap b_\alpha .$$ Also $\alpha\neq\alpha_{l}$, for all $l<n$ and all $\alpha\in E_n$: otherwise we would have $b_\alpha \cap b_\beta \neq\emptyset\neq a_\alpha \cap a_\beta$, which is a contradiction.  Enumerate $E_n \cap H_{n-1}:=\lbrace\alpha_{n}^0 ,...,\alpha_{n}^{k_n -1}\rbrace$ and define $$H_{k}^i :=\lbrace\beta\in H_{k-1}:b_{\alpha_{n}^i}\cap b\cap b_\beta \neq\emptyset\rbrace.$$ Since there exists $i_n <k_n$ such that $\vert H_{n}^{i_n}\vert=\kappa$, define $\alpha_n :=\alpha_{n}^{i_n}$ and $H_{n}:=H_{n}^{i_n}$. Since we can continue this recursion, we get $\lbrace\alpha_n :n<\omega\rbrace\subseteq\kappa$ such that $\lbrace b_{\alpha_n}:n<\omega\rbrace$ is a centered family.
%Since $\omega_1 \leq\kappa$, using the Delta System Lemma (see \cite{kunen}, Lemma III.2.6), there exist $\lbrace\beta_\delta :\delta <\omega_1 \rbrace\in[\kappa]^{\omega_1}$ and $r\in[\kappa]^{<\omega}$, such that $E_{\beta_\delta} \cap E_{\beta_\gamma}=r$ for all $\gamma<\delta<\omega_1$. Define $$c:=Y\setminus\bigcup_{\alpha\in r}b_\alpha$$ and observe that it is not empty: if $\bigcup_{\alpha\in r}b_\alpha =Y$, then $r=E_{\beta_\delta}$, for all $\delta<\omega_1$, which is a contradiction. Take $\mathcal{U}$ an ultrafilter on $B$ such that $c\in\mathcal{U}$. Note that $c=\bigcup_{\alpha\in E_{\beta_\delta}\setminus r}b_\alpha$, for all $\delta<\omega_1$. Therefore for every $\delta<\omega_1$, there exists $\alpha_\delta \in E_{\beta_\delta}\setminus r$  such that $b_{\alpha_\delta}\in \mathcal{U}$. Then $\lbrace b_{\alpha_n} : n<\omega\rbrace$ is a centered family.
\end{proof}

\begin{theorem}
\label{cota_b}
Suppose that %there exist Rothberger gaps in $A$ and $B$ and that
$\omega_1 \leq\mathfrak{a}(A),\mathfrak{a}(B)$. Then  $\min\lbrace\mathfrak{b}(A),\mathfrak{b}(B)\rbrace\leq\mathfrak{a}(A\oplus B)$.
\end{theorem}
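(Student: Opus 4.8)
The plan is to take an arbitrary infinite partition $P=\{a_\alpha\times b_\alpha:\alpha<\kappa\}$ of $A\oplus B$, put into the normal form provided by the opening observations, and to show that $\min\{\mathfrak{b}(A),\mathfrak{b}(B)\}\leq\kappa$; taking the infimum over all such $P$ then yields the theorem. First I would invoke Lemma \ref{caso_infinito} to obtain a set $E=\{\alpha_n:n<\omega\}\subseteq\kappa$ for which, say, $\{b_{\alpha_n}:n<\omega\}$ is centered, the other case being symmetric under the exchange of $A$ and $B$. Since $\omega_1\leq\mathfrak{a}(B)$ gives $\omega_1\leq\mathfrak{p}(B)$, this countable centered family admits a pseudointersection $b^*\in B^+$ with $b^*\leq b_{\alpha_n}$ for all $n$.

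The countable side of the gap will be $\mathcal{A}:=\{a_{\alpha_n}:n<\omega\}$. Disjointness of $P$ together with $b^*\leq b_{\alpha_n}\cdot b_{\alpha_m}\neq 0$ forces $a_{\alpha_n}\cdot a_{\alpha_m}=0$ whenever $n\neq m$, so $\mathcal{A}$ is a genuinely countably infinite disjoint subfamily of $A^+$. For the other side I set $G:=\{\beta<\kappa:b_\beta\cdot b^*\neq 0\}$ and $\mathcal{B}:=\{a_\beta:\beta\in G\setminus E\}$. Indeed, for $\beta\in G$ and any $n$ with $\beta\neq\alpha_n$ the products $a_\beta\times b_\beta$ and $a_{\alpha_n}\times b_{\alpha_n}$ are disjoint, while $b_\beta\cdot b_{\alpha_n}\geq b_\beta\cdot b^*\neq 0$, which forces $a_\beta\cdot a_{\alpha_n}=0$; hence $\mathcal{A}$ and $\mathcal{B}$ are orthogonal, $\mathcal{B}\subseteq A^+$, and $|\mathcal{B}|\leq\kappa$. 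It then remains to prove that $(\mathcal{A},\mathcal{B})$ is a Rothberger gap, which gives $\mathfrak{b}(A)\leq|\mathcal{B}|\leq\kappa$ and finishes the argument.

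The heart of the proof is showing there is no separator. Suppose toward a contradiction that $c\in A^+$ satisfies $c\cdot a_{\alpha_n}=0$ for all $n$ and $a_\beta\leq c$ for all $\beta\in G\setminus E$. Working in $X\times Y$, the hypothesis that $P$ is a partition means $\bigcup P$ is dense; intersecting with the clopen slab $X\times b^*$, the set $\bigcup_{\beta\in G}a_\beta\times(b_\beta\cdot b^*)$ is dense in $X\times b^*$. Splitting $G$ into $E$, which contributes the full columns $a_{\alpha_n}\times b^*$, and $G\setminus E$, which contributes sets below $c\times b^*$, this dense set lies inside $\bigl((\bigcup_n a_{\alpha_n})\cup c\bigr)\times b^*$. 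Projecting onto the first factor shows $(\bigcup_n a_{\alpha_n})\cup c$ is dense in $X$, and since $c$ is disjoint from every $a_{\alpha_n}$, the family $\{a_{\alpha_n}:n<\omega\}\cup\{c\}$ is a countable maximal antichain, i.e. a countable partition of $A$, contradicting $\omega_1\leq\mathfrak{a}(A)$. The same slab computation in the degenerate case $G=E$ (that is, $\mathcal{B}=\emptyset$) would make $\mathcal{A}$ itself a countable partition of $A$, so in fact $\mathcal{B}\neq\emptyset$ and $(\mathcal{A},\mathcal{B})$ is a genuine gap.

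I expect the main obstacle to be exactly this last topological step: translating ``$P$ is a partition'' into density of $\bigcup P$, coping with the fact that $U:=\bigcup_n a_{\alpha_n}$ is merely open (not clopen, the family being infinite) so that one cannot take a single Boolean complement, and justifying that density is preserved both under restriction to the clopen slab $X\times b^*$ and under projection back to $X$. Once the hypothetical separator has been recast as the extra clopen block $c$ completing $\mathcal{A}$ to a countable maximal antichain, the clash with $\omega_1\leq\mathfrak{a}(A)$ is immediate; the symmetric alternative delivered by Lemma \ref{caso_infinito} yields $\mathfrak{b}(B)\leq\kappa$ instead, and in either case $\min\{\mathfrak{b}(A),\mathfrak{b}(B)\}\leq\kappa$.
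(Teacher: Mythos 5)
Your proposal is correct and is essentially the paper's own argument run in the contrapositive: the paper uses $\vert E\vert<\mathfrak{b}(B)$ to extract a separator $c$ for the same orthogonal pair and then produces a witness $a\times b$ missing all of $P$, whereas you show that pair is already a Rothberger gap of size at most $\kappa$ by arguing that any separator would complete the countable disjoint family to a countable partition (your slab-and-projection density computation, which is sound and uses $\omega_1\leq\mathfrak{a}$ at exactly the same point the paper does). All the key ingredients --- Lemma \ref{caso_infinito}, the pseudointersection of the centered side, the index set of blocks meeting it, and the resulting orthogonal pair --- coincide, so this is the same proof with the endgame reorganized.
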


\begin{proof}
Suppose that $\kappa<\mathfrak{b}(A),\mathfrak{b}(B)$ and that  $P=\lbrace a_\alpha \times b_\alpha  :\alpha<\kappa\rbrace$ is an infinite partition of $A\oplus B$. From Lemma \ref{caso_infinito}, without loss of generality we can suppose that $\lbrace a_n :n<\omega\rbrace$ is a centered family and that $\lbrace b_n :n<\omega\rbrace$ is a pairwise disjoint family. Take $a\in A^+$ such that $a\subseteq a_n$, for all $n<\omega$.  Define $E:=\lbrace\alpha\in\kappa\setminus\omega: a_\alpha \cap a\neq\emptyset\rbrace$. Therefore $b_n \cap b_\alpha =\emptyset$ for all $n<\omega$ and $\alpha\in E$. Since $\vert E\vert<\mathfrak{b}(B)$, there exist $c\in B$ such that $b_\alpha \subseteq c$, for all $\alpha \in E$, and $b_n \cap c=\emptyset$, for all $n<\omega$. Since $\lbrace b_n :n<\omega\rbrace\cup\lbrace c\rbrace$ is not an infinite partition of $B$, take $b\in B^+$ as a witness of this fact. Take $\alpha<\kappa$. If $\alpha\in\omega\cup E$, then $b\cap b_\alpha =\emptyset$. If $\alpha\notin\omega\cup E$, then $a\cap a_\alpha =\emptyset$. Either way $a\times b$ witnesses that $P$ is not an infinite partition.

\end{proof}

%\begin{theorem}
%$\mathfrak{b}\leq\mathfrak{a}(2)$.
%\end{theorem}

%\begin{proof}
%Suppose that $\kappa<\mathfrak{b}$ and  $\mathcal{A}=\lbrace (X_\alpha ,Y_\alpha ) :\alpha<\kappa\rbrace\subseteq[\omega]^\omega \times[\omega]^\omega$ is a 2-mad family. From Lemma \ref{caso_infinito}, without loss of generality we can suppose that $\lbrace X_n :n<\omega\rbrace$ is a centered family and that $\lbrace Y_n :n<\omega\rbrace$ is a pairwise disjoint family. Take $X\in[\omega]^\omega$ such that $X\subseteq^* X_n$ for all $n<\omega$. Define $E:=\lbrace \alpha\in\kappa\setminus\omega:\vert X\cap X_\alpha \vert =\omega\rbrace$ and, for every $\alpha\in E$, a function $f_\alpha :\omega\rightarrow\omega$ such that $Y_\alpha \cap Y_n \subseteq f_\alpha (n)$, for all $n<\omega$. Let $f$ be an upper bound for the family $\lbrace f_\alpha :\alpha\in E\rbrace$. For $n<\omega$, define $y_n :=\min Y_n \setminus(f(n)+1)$, and the set $Y:=\lbrace y_n :n<\omega \rbrace$. Take $\alpha<\kappa$. If $\alpha\in E\cup \omega$, then $y_n \notin Y_\alpha$, for almost all $n<\omega$. Otherwise $\vert X\cap X_\alpha \vert<\omega$. In either case, the pair $(X,Y)$ is a witness of $\mathcal{A}$  not being a 2-mad family, which is a contradiction.
%\end{proof}

This result is not precisely an improvement of Theorem \ref{cota1} on a broad class of infinite Boolean algebras. Nevertheless, some of its applications definitely are. Now these theorems will be applied to the more familiar case when $A=B=P(\omega)\slash fin$. So a word on its cardinal invariants will be given.

Recall that $\mathfrak{a}$ is the least size of an infinite \textit{maximal almost disjoint} (mad) family, i.e. a family $\lbrace A_\alpha :\alpha<\kappa\rbrace\subseteq[\omega]^\omega$, such that $\vert A_\alpha \cap A_\beta \vert<\omega$, for all $\alpha<\beta<\kappa$, and that for all $X\in[\omega]^\omega$ there exists $\alpha<\kappa$ such that $\vert X\cap A_\alpha \vert=\omega$. Also the number $\mathfrak{s}$ is defined as the least size of a \textit{splitting} family. i.e. a family $\lbrace A_\alpha :\alpha<\kappa\rbrace\subseteq[\omega]^\omega$, such that for all $X\in[\omega]^\omega$ there exists $\alpha<\kappa$ such that $\vert X\cap A_\alpha \vert=\vert X\setminus A_\alpha \vert=\omega$. It is an easy observation that $\mathfrak{a}(P(\omega)\slash fin)=\mathfrak{a}$ and that $\mathfrak{s}(P(\omega)\slash fin)=\mathfrak{s}$.

Similarly the cardinal $\mathfrak{b}$ can be defined to be $\mathfrak{b}(P(\omega)\slash fin)$, though its usual definition is as the smallest size of a family $\mathcal{F}\subseteq\omega^\omega$ such that for all $g\in\omega^\omega$ there exists $f\in\mathcal{F}$ such that $f(m)> g(m)$, for infinitely many $m<\omega$. The equivalence of both definitions was proved in \cite{rothberger}.

The cardinal $\mathfrak{a}(1):=\mathfrak{a}$ is the first (but not least) element of related cardinal invariants.
Observe that when dealing with the infinite partitions of the finite free products $\bigoplus_{i<n}P(\omega)\slash fin$, for $2\leq n<\omega$, we can always assume that they are of the type $$\lbrace\prod_{i<n}X^{\alpha}_i :\alpha<\kappa\rbrace,$$ where each $X^{\alpha}_i$ is a non-empty clopen set of $\beta\omega\setminus\omega$. Since $$\prod_{i<n}X^{\alpha}_i \cap\prod_{i<n}X^{\beta}_i =\emptyset$$ iff there exists $i<n$ such that $X^{\alpha}_i \cap X^{\beta}_i$, and each $X^{\alpha}_i$ can also be thought of as an infinite set of $\omega$, the following definition gives us an infinite combinatorics way to approach these infinite partitions.

\begin{mydef}
\label{n-mad}

Take $2\leq n<\omega$. An infinite family $\lbrace (X^{0}_\alpha ,...,X^{n-1}_\alpha ): \alpha <\kappa\rbrace\subseteq([\omega]^\omega )^n$ is called an $n$-ad family if for all $\alpha <\beta <\kappa$ there exists $i<n$ such that $\vert X^{i}_\alpha \cap X^{i}_\beta \vert <\omega$. It will be called an $n$-mad family if it is maximal with this property. Define $\mathfrak{a}(n)$ as the smallest size of an $n$-mad family.
\end{mydef}

\begin{corollary}
The following statements hold:
\begin{enumerate}

\item $\omega_1 \leq\mathfrak{a}(n+1)\leq\mathfrak{a}(n)\leq\mathfrak{a}$, for all $1\leq n<\omega$.

\item $\min\lbrace\mathfrak{s}, \mathfrak{a}(n-1)\rbrace\leq\mathfrak{a}(n)$, for all $2\leq n<\omega$.

\item $\mathfrak{b}\leq\mathfrak{a}(2)$.
\end{enumerate}
\end{corollary}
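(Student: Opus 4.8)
The plan is to deduce all three items from the general theorems already proved, by working through the identification $\mathfrak{a}(n)=\mathfrak{a}\!\left(\bigoplus_{i<n}P(\omega)\slash fin\right)$ set up just before Definition \ref{n-mad}, together with the associativity of the free product and the equalities $\mathfrak{a}(1)=\mathfrak{a}$, $\mathfrak{s}(P(\omega)\slash fin)=\mathfrak{s}$, $\mathfrak{b}(P(\omega)\slash fin)=\mathfrak{b}$ and $\mathfrak{p}(P(\omega)\slash fin)=\mathfrak{p}$. Throughout I write $C_m:=\bigoplus_{i<m}P(\omega)\slash fin$, so that $C_m\oplus P(\omega)\slash fin\cong C_{m+1}$ and $\mathfrak{a}(C_m)=\mathfrak{a}(m)$.

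For item (1), the chain $\mathfrak{a}(n+1)\leq\mathfrak{a}(n)\leq\mathfrak{a}$ is immediate from the introductory observation $\mathfrak{a}(A\oplus B)\leq\mathfrak{a}(A)$: applied to $A=C_n$ and $B=P(\omega)\slash fin$ it gives $\mathfrak{a}(n+1)=\mathfrak{a}(C_n\oplus P(\omega)\slash fin)\leq\mathfrak{a}(C_n)=\mathfrak{a}(n)$, and the base case is $\mathfrak{a}(1)=\mathfrak{a}$. Equivalently, in the language of Definition \ref{n-mad}, appending the constant coordinate $\omega$ sends an $n$-mad family to an $(n+1)$-mad family of the same size. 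For the lower bound I would use $\mathfrak{p}(A)\leq\mathfrak{a}(A)$ together with Theorem 11 of \cite{mio}: since $\mathfrak{p}(C_m)=\mathfrak{p}(P(\omega)\slash fin)=\mathfrak{p}$ and $\mathfrak{p}\geq\omega_1$ holds in ZFC, we get $\mathfrak{a}(m)\geq\mathfrak{p}\geq\omega_1$ for every $m\geq 1$, which in particular gives $\omega_1\leq\mathfrak{a}(n+1)$.

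Item (3) is a direct instance of Theorem \ref{cota_b} with $A=B=P(\omega)\slash fin$: the hypothesis $\omega_1\leq\mathfrak{a}$ holds, so $\mathfrak{b}=\min\lbrace\mathfrak{b}(A),\mathfrak{b}(B)\rbrace\leq\mathfrak{a}(A\oplus B)=\mathfrak{a}(2)$. Item (2) is the analogous instance of Theorem \ref{cota_s} with $A=C_{n-1}$ and $B=P(\omega)\slash fin$. The hypotheses $\omega_1\leq\mathfrak{a}(C_{n-1})=\mathfrak{a}(n-1)$ and $\omega_1\leq\mathfrak{a}$ come from item (1), so once homogeneity is available Theorem \ref{cota_s} yields $\mathfrak{a}(n)\geq\min\lbrace\mathfrak{a}(n-1),\mathfrak{a},\max\lbrace\mathfrak{s}(C_{n-1}),\mathfrak{s}\rbrace\rbrace$; since $\mathfrak{a}(n-1)\leq\mathfrak{a}$ by item (1) and $\max\lbrace\mathfrak{s}(C_{n-1}),\mathfrak{s}\rbrace\geq\mathfrak{s}$, the right-hand side is at least $\min\lbrace\mathfrak{a}(n-1),\mathfrak{s}\rbrace$, which is exactly the claim.

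The main obstacle is the remaining hypothesis of Theorem \ref{cota_s}: that $C_{n-1}$ is homogeneous (homogeneity of $P(\omega)\slash fin$ is already recorded). I would establish homogeneity of $C_m$ for every finite $m$ by three observations. First, relativizing a free product to a rectangle is well behaved, $(A\oplus B)\upharpoonright(a\times b)\cong(A\upharpoonright a)\oplus(B\upharpoonright b)$, so for a rectangle $r=a_0\times\cdots\times a_{m-1}$ with each $a_i\in(P(\omega)\slash fin)^+$ one gets $C_m\upharpoonright r\cong\bigoplus_{i<m}\left(P(\omega)\slash fin\upharpoonright a_i\right)\cong\bigoplus_{i<m}P(\omega)\slash fin=C_m$, using homogeneity of the factors. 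Second, iterating the normalization displayed at the start of this section over the $m$ coordinates shows that every $x\in C_m^+$ is a finite disjoint union of rectangles $r_0,\dots,r_{p-1}$, whence $C_m\upharpoonright x\cong\prod_{j<p}\left(C_m\upharpoonright r_j\right)\cong C_m^{\,p}$. Third, $C_m$ absorbs finite products: from $P(\omega)\slash fin\cong P(\omega)\slash fin\times P(\omega)\slash fin$ (split $\omega$ into two infinite halves) and the distributivity $A\oplus(B\times B')\cong(A\oplus B)\times(A\oplus B')$ one obtains $C_m\cong C_m\times C_m$, and hence $C_m^{\,p}\cong C_m$ for all finite $p\geq 1$. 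Combining the three, $C_m\upharpoonright x\cong C_m$ for every $x\in C_m^+$, so $C_m$ is homogeneous. This verification of homogeneity for the finite free powers of $P(\omega)\slash fin$ is where the real work lies; once it is in place, items (2) and (3) are just the two general theorems evaluated at these algebras, and item (1) is bookkeeping with the free product.
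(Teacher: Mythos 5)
Your proof is correct and follows the same overall route as the paper: item (1) from the monotonicity $\mathfrak{a}(A\oplus B)\leq\mathfrak{a}(A)$ together with a ZFC lower bound of $\omega_1$, item (2) from Theorem \ref{cota_s} applied to $A=\bigoplus_{i<n-1}P(\omega)\slash fin$ and $B=P(\omega)\slash fin$, and item (3) from Theorem \ref{cota_b} with $A=B=P(\omega)\slash fin$. You diverge in two places, both to your advantage. First, the paper disposes of the term $\max\lbrace\mathfrak{s}(A),\mathfrak{s}(B)\rbrace$ by invoking the equality $\mathfrak{s}(A\oplus B)=\min\lbrace\mathfrak{s}(A),\mathfrak{s}(B)\rbrace$ from \cite{mio} to compute $\mathfrak{s}\left(\bigoplus_{i<n-1}P(\omega)\slash fin\right)=\mathfrak{s}$; you instead just note that the maximum is at least $\mathfrak{s}$, which already gives $\min\lbrace\mathfrak{s},\mathfrak{a}(n-1)\rbrace$ on the left-hand side and needs no external input. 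Second, and more substantially, Theorem \ref{cota_s} requires \emph{both} factors to be homogeneous, and the paper's one-line proof never checks that the finite free powers $\bigoplus_{i<m}P(\omega)\slash fin$ are homogeneous (only $P(\omega)\slash fin$ itself is recorded as such). Your three-step verification --- relativizing a free product to a rectangle gives the free product of the relativizations, every nonzero element is a finite disjoint union of rectangles so the relative algebra is a finite power of the whole algebra, and that finite power is absorbed because $P(\omega)\slash fin$ is isomorphic to its own square and $\oplus$ distributes over finite products --- is correct and fills a genuine gap that the paper leaves to the reader. In short, your write-up is the paper's argument with the missing hypothesis actually verified.
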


\begin{proof}
\textit{1} follows from definition and $\omega_1 \leq\mathfrak{a},\mathfrak{s}$. Since $\mathfrak{s}(A\oplus B)=\min\lbrace\mathfrak{s}(A),\mathfrak{s}(B)\rbrace$, for all infinite atomless Boolean algebras $A$ and $B$ (see \cite{mio}), \textit{2} follows from Theorem \ref{cota_s} and \textit{1}. \textit{3} follows from Theorem \ref{cota_b}.
\end{proof}

Going back to Question \ref{pregunta_monk} and focusing on the case of the finite free products of $P(\omega)\slash fin$, this corollary implies that if any of these products is (consistently) a counterexample of said equality, it has to be in a model of $\mathfrak{s}<\mathfrak{a}$. In the specific case of $P(\omega)\slash fin\oplus P(\omega)\slash fin$, also $\mathfrak{b}<\mathfrak{a}$ must hold in the model. Besides these observations, the existence of these models remain open.

\begin{question}
\begin{enumerate}
    \item  Is it consistent that $\omega_1 =\mathfrak{s}=\mathfrak{a}(n)<\mathfrak{a(n-1)}=\omega_2$, for any $2\leq n<\omega$?

    \item Is it consistent that $\omega_1 =\mathfrak{s}=\mathfrak{b}=\mathfrak{a}(2)<\mathfrak{a}=\omega_2 ?$
\end{enumerate}
\end{question}

\end{document}